\crefname{lemma}{Lemma}{Lemmas}
\crefname{theorem}{Theorem}{Theorems}
\def\@settitle{\begin{center}%
  \baselineskip14\p@\relax
  \bfseries
  \uppercasenonmath\@title
  \@title
  \ifx\@subtitle\@empty\else
     \\[1ex]\uppercasenonmath\@subtitle
     \footnotesize\mdseries\@subtitle
  \fi
  \end{center}%
}
\def\subtitle#1{\gdef\@subtitle{#1}}
\def\@subtitle{}
\newtheorem{theorem}{Theorem}[section]
\newtheorem{corollary}{Corollary}[theorem]
\theoremstyle{remark}
\newtheorem{remark}[theorem]{Remark}
\theoremstyle{remark}
\newcommand{\OO}{\mathrm{O}}
\newcommand{\calO}{\mathcal{O}}
\newcommand{\R}{\mathbb{R}}
\newcommand{\Hyp}{\mathbb{H}}
\begin{document}

\title{Density of systoles of hyperbolic manifolds}

\author{Sami Douba}
\author{Junzhi Huang}

\begin{abstract}
We show that for each $n \geq 2$, the systoles of closed hyperbolic $n$-manifolds form a dense subset of $(0, +\infty)$. We also show that for any $n\geq 2$ and any Salem number $\lambda$, there is a closed arithmetic hyperbolic $n$-manifold of systole $\log(\lambda)$. In particular, the Salem conjecture holds if and only if the systoles of closed arithmetic hyperbolic manifolds in some (any) dimension fail to be dense in $(0, +\infty)$.
\end{abstract}

\address{Institut des Hautes \'Etudes Scientifiques, 35 route de Chartres, 91440 Bures-sur-Yvette, France}
\email{douba@ihes.fr}

\address{Department of Mathematics, Yale University, New Haven, CT 06511, USA}
\email{junzhi.huang@yale.edu}

\maketitle

The {\em systole} $\mathrm{sys}(M)$ of a closed nonpositively curved Riemannian manifold $M$ is the length of a shortest closed geodesic in $M$. 
It follows for instance from residual finiteness of surface groups that there is a sequence of closed hyperbolic surfaces with systole going to infinity. Since for any $\epsilon > 0$ and any closed surface $M$ of negative Euler characteristic there is a hyperbolic metric on $M$ of systole $< \epsilon$, continuity of the systole function on moduli space then yields for any $L>0$ a closed hyperbolic surface of systole precisely $L$.

By Mostow rigidity \cite{mostow1968quasi}, the set of systoles of closed hyperbolic $n$-manifolds is countable for $n \geq 3$. Nevertheless, we show the following.

\begin{theorem}\label{thm:density-of-systoles}
  For any $n\geq 2$, the set of systoles of closed hyperbolic $n$-manifolds is dense in $(0,+\infty)$.
\end{theorem}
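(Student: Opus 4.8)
The plan is to dispose of $n = 2$ by the argument already sketched in the introduction (residual finiteness gives closed hyperbolic surfaces of arbitrarily large systole, every surface of negative Euler characteristic carries hyperbolic metrics of arbitrarily small systole, and continuity of the systole on moduli space interpolates all intermediate values), and to treat $n \geq 3$ by combining two ingredients. The first I take as a black box: in every dimension $n$ there exist closed hyperbolic $n$-manifolds of arbitrarily small systole, by a construction of Agol in dimension four and of Belolipetsky and Thomson in general (``inbreeding''). This input is indispensable, since every finite cover of a fixed closed hyperbolic $M$ has systole $\geq \mathrm{sys}(M)$, so no purely covering-theoretic argument can descend below a fixed positive bound.

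The second ingredient is a covering argument. Fix a target $L > 0$ and a tolerance $\delta > 0$; using the first ingredient, choose a closed hyperbolic $n$-manifold $M$ with a primitive closed geodesic $\gamma$ of length $\epsilon < \delta$, and set $k = \lceil L/\epsilon \rceil$, so that $k\epsilon \in [L, L + \epsilon) \subseteq [L, L + \delta)$. I claim one can pass to a finite cover $\widehat M \to M$ in which the lift of $\gamma$ wraps exactly $k$ times — producing a closed geodesic of length $k\epsilon$ — while every closed geodesic of $\widehat M$ has length at least $L$; then $\mathrm{sys}(\widehat M) \in [L, L + \delta)$, and since $L$ and $\delta$ are arbitrary this proves the theorem. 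As $M$ is closed, its length spectrum below $L$ is finite, so besides $\gamma$ only finitely many primitive geodesics $\gamma_1, \dots, \gamma_r$ have length $< L$, and every closed geodesic of a finite cover that is shorter than $L$ projects to a power of one of $\gamma, \gamma_1, \dots, \gamma_r$. It therefore suffices to find a finite quotient of $\Gamma := \pi_1(M)$ in which $\gamma$ has order exactly $k$ while each $\gamma_i$ has order exceeding $L/\ell(\gamma_i)$: in the associated regular cover, each lift of $\gamma_i$ has length $\mathrm{ord}(\gamma_i)\cdot\ell(\gamma_i) > L$, the shortest lift of $\gamma$ has length $k\epsilon \in [L, L+\delta)$, and lifts of the remaining (length $\geq L$) geodesics of $M$ are no shorter.

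Such a quotient I would produce by reducing a faithful integral representation $\Gamma \hookrightarrow \mathrm{GL}_m(\mathcal{O}_S)$ modulo a well-chosen prime $\mathfrak p$. The order of a loxodromic $g \in \Gamma$ modulo $\mathfrak p$ is large unless $\mathfrak p$ divides one of finitely many ideals — those generated by the entries of $g^d - I$ for small $d$ — so for the $\gamma_i$ there is only a finite ``bad'' set of primes to avoid; on the other hand, a prime modulo which $\gamma$ has order exactly $k$ is a primitive prime divisor of $\gamma^k - I$ in the matrix sense, which, by a Zsigmondy-type statement and for $k$ large, exists with large norm and hence lies outside the bad set. Since $\mathrm{sys}(M)$ may be taken as small as we wish, $k = \lceil L/\mathrm{sys}(M)\rceil$ is as large as the argument requires.

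I expect the main obstacle to be precisely this last coordination: pinning the order of the distinguished element $\gamma$ to the exact value $k$ while simultaneously forcing the finitely many competing short loxodromics to have large order, all in a single finite quotient — equivalently, combining separability of $\langle \gamma^k \rangle$ in $\Gamma$ (which retains $\gamma^k$ but kills $\gamma, \dots, \gamma^{k-1}$ together with all conjugates of the short $\gamma_i^{\,j}$) with the constraint that the cover not over-wrap $\gamma$. It is here that the linearity of $\pi_1(M)$, reduction modulo primes, and primitive-prime-divisor estimates do the work; the rest of the argument is either soft or imported.
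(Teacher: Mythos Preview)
Your route differs from the paper's, which does not use inbreeding merely as a source of small systoles to be amplified by covers. Instead it runs the cut-and-double construction directly at the target length: one fixes a cocompact arithmetic lattice $\Gamma<\mathrm{Isom}(\Hyp^n)$ associated to a form $f$ over a totally real field $k$, uses density of $\OO'(f;k)$ in $\OO'(f;\R)$ to find two $k$-rational hyperplanes $H_1,H_2\subset\Hyp^n$ at distance within $\epsilon/2$ of $L/2$, and passes to a principal congruence subgroup $\Gamma(I)$ so that the quotient has systole $>L$, the $H_i$ descend to disjoint embedded two-sided hypersurfaces $\Sigma_i$, and the common perpendicular between them remains the shortest orthogeodesic with endpoints on $\Sigma_1\cup\Sigma_2$. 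Cutting along $\Sigma_1\cup\Sigma_2$ and doubling then yields systole within $\epsilon$ of $L$. No control on orders of specific elements in finite quotients is required, and as a bonus the intermediate manifold is congruence arithmetic.

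The coordination you flag as the main obstacle is a genuine gap, and the Zsigmondy sketch does not close it. For fixed $\gamma$ and $k$ the primes with $\gamma^k\equiv I\pmod{\mathfrak p}$ form a \emph{finite} set (the divisors of the ideal generated by the entries of $\gamma^k-I$), so you need one of these finitely many primes to lie outside the bad set $B$. You propose enlarging $k$ so that primitive divisors have large norm and hence miss $B$; but $k=\lceil L/\epsilon\rceil$ is tied to the choice of $M$, and $B$---both its cardinality and the norms of its members, since the thresholds $L/\ell(\gamma_i)$ scale like $k$---moves with $M$ as well, so nothing decouples the good primes from the bad ones. Two further issues: even when the leading eigenvalue $\lambda$ has multiplicative order exactly $k$ modulo $\mathfrak p$, the matrix $\gamma$ can have strictly larger order once the holonomy carries a rotational part, as is generic for loxodromics in $\OO(n,1)$ with $n\geq 3$; and separability of $\langle\gamma^k\rangle$ produces only a finite-index subgroup, whose normal core need not contain $\gamma^k$, so you do not obtain the regular cover your length computation assumes. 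The paper's approach sidesteps all of this by building an orthogeodesic of the desired length into the construction from the outset.
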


That $0$ is an accumulation point of systoles of closed hyperbolic $3$-manifolds follows for instance from Thurston's hyperbolic Dehn filling theory (see \cite[Sections~E.5~and~E.6]{MR1219310}). The analogous fact for $4$-manifolds was established by Agol~\cite{agol2006systoles}, whose strategy was to ``inbreed'' arithmetic manifolds along totally geodesic hypersurfaces. This strategy was successfully extended to arbitrary dimensions independently by Belolipetsky--Thomson~\cite{MR2821431} and Bergeron--Haglund--Wise \cite{bergeron2011hyperplane}. The present note is an elaboration on the utility of this inbreeding technique.

We also establish a variant of Theorem \ref{thm:density-of-systoles} for arithmetic hyperbolic manifolds. A {\em Salem number} is a real algebraic integer $\lambda > 1$ of degree $\geq 4$ that is Galois-conjugate to $\lambda^{-1}$ and all of whose remaining Galois conjugates lie on the unit circle. 

\begin{theorem}\label{thm:arithmetic-systoles}
Let $\lambda > 1$ be a Salem number. Then for any $n \geq 2$, there is a closed arithmetic hyperbolic $n$-manifold of systole precisely $\log(\lambda)$.
\end{theorem}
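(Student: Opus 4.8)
The plan is to produce the geodesic directly as a loxodromic element of a standard cocompact arithmetic lattice of simplest type built from $\lambda$, and then to pass to a finite cover that destroys every shorter closed geodesic while retaining this one.

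\emph{Step 1: the lattice and the geodesic.} Let $k=\mathbb{Q}(\lambda+\lambda^{-1})$. Since $\lambda$ is Galois-conjugate to $\lambda^{-1}$ and its remaining conjugates lie on the unit circle, $k$ is totally real with $[k:\mathbb{Q}]=d\geq 2$ (as $\deg\lambda\geq 4$), and for $t:=\lambda+\lambda^{-1}\in\mathcal{O}_k$ the identity embedding $v_0$ satisfies $v_0(t)>2$ while every other real place $v$ of $k$ satisfies $|v(t)|<2$ (the roots of $x^{2}-tx+1$ being $\lambda,\lambda^{-1}$ at $v_0$ and a conjugate pair on the unit circle at $v$). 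Let $q_0$ be the binary form over $k$ with Gram matrix $\bigl(\begin{smallmatrix}2&t\\ t&2\end{smallmatrix}\bigr)$; its determinant $4-t^{2}$ is negative at $v_0$ and positive at every other real place, where moreover the trace $4$ is positive, so $q_0$ has signature $(1,1)$ at $v_0$ and is positive definite at all other real places. Put $q=q_0\perp\langle 1\rangle^{\oplus(n-1)}$, a form over $k$ of signature $(n,1)$ at $v_0$ and positive definite elsewhere; being definite at some real place it is anisotropic over $k$, so the image $\Gamma_0$ of $\mathrm{SO}(q)(\mathcal{O}_k)$ in $\mathrm{Isom}(\mathbb{H}^n)$ is a cocompact arithmetic lattice. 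The companion matrix of $x^{2}-tx+1$, extended by the identity on $\langle 1\rangle^{\oplus(n-1)}$, defines an element $\gamma\in\mathrm{SO}(q)(\mathcal{O}_k)$ (it is integral of determinant $1$ and preserves $q_0$ by a direct check), and at $v_0$ it has eigenvalues $\lambda,\lambda^{-1}$ and $1$ with multiplicity $n-1$, with positive eigenvalues on the plane $q_0$; thus $\gamma$ acts on $\mathbb{H}^n$ as a loxodromic isometry of translation length $\log\lambda$, so $\mathbb{H}^n/\Gamma_0$ carries a closed geodesic of that length.

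\emph{Step 2: reducing to a separability statement.} Because $\Gamma_0$ is cocompact, it has only finitely many conjugacy classes of loxodromic elements of translation length $<\log\lambda$; fix representatives $\delta_1,\dots,\delta_r$. None of these is $\Gamma_0$-conjugate to a power of $\gamma$, since $\gamma^{m}$ has translation length $|m|\log\lambda$. For the same reason $\Gamma_0$ has only finitely many conjugacy classes of finite-order elements, with representatives $\tau_1,\dots,\tau_s$, again none conjugate to a power of the infinite-order element $\gamma$. Suppose we can find a finite-index subgroup $\Gamma\leq\Gamma_0$ with $\gamma\in\Gamma$ that meets none of the conjugacy classes $\delta_j^{\Gamma_0}$ or $\tau_l^{\Gamma_0}$. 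Then $\Gamma$ is torsion-free, so $M:=\mathbb{H}^n/\Gamma$ is a closed arithmetic hyperbolic $n$-manifold; any loxodromic $g\in\Gamma$ of translation length $<\log\lambda$ would be $\Gamma_0$-conjugate to some $\delta_j$ and hence lie in $\Gamma\cap\delta_j^{\Gamma_0}=\varnothing$, a contradiction, so $\mathrm{sys}(M)\geq\log\lambda$; and $\gamma\in\Gamma$ gives $\mathrm{sys}(M)\leq\log\lambda$. Hence $\mathrm{sys}(M)=\log\lambda$.

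\emph{Step 3: the separability input, which is the crux.} It remains to prove: if $\gamma\in\Gamma_0$ has infinite order and $g\in\Gamma_0$ is not $\Gamma_0$-conjugate to any power of $\gamma$, then $\Gamma_0$ has a finite-index subgroup containing $\gamma$ and disjoint from the conjugacy class $g^{\Gamma_0}$; one then intersects the subgroups obtained for the finitely many $g\in\{\delta_1,\dots,\delta_r,\tau_1,\dots,\tau_s\}$. This is where I expect the genuine difficulty to lie, and where the arithmetic nature of $\Gamma_0$ is essential: as a cocompact arithmetic lattice of simplest type, $\Gamma_0$ acts properly and cocompactly on a $\mathrm{CAT}(0)$ cube complex \cite{bergeron2011hyperplane} and is therefore virtually special (Agol), hence LERF, conjugacy separable, and---what is actually needed---possessed of the requisite separability of finitely generated subgroups from disjoint conjugacy classes (Minasyan); granting this package, the required subgroup is produced in the standard way. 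Everything else---the field-theoretic setup, the integrality and loxodromicity of $\gamma$ with the correct translation length, and the bookkeeping of Step 2---is routine.
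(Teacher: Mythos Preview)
Your approach is genuinely different from the paper's. The paper does not try to retain a prescribed loxodromic element in a finite cover. Instead it realizes $\log\lambda$ as twice the distance between two hyperplanes $H_1,H_2$ fixed by reflections $\tau_1,\tau_2$ that already lie in the arithmetic lattice $\Gamma=\mathrm{O}'(f;\mathcal{O}_k)$; it then passes to a principal congruence subgroup $\Gamma(I)$ in which the $H_i$ descend to disjoint embedded two--sided hypersurfaces with orthogeodesic spectrum bounded below by $\log(\lambda)/2$, cuts along them, and doubles. Arithmeticity of the double is immediate because the $\tau_i$ belong to $\Gamma$. Everything stays within congruence--subgroup methods; no separability beyond residual finiteness of $\mathcal{O}_k$ and closedness of hyperplane stabilizers in the congruence topology is used. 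By contrast, your route trades this elementary geometric input for the full force of virtual specialness and strong separability.

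The genuine gap is in Step~3. What you need is that the cyclic subgroup $\langle\gamma\rangle$ is \emph{conjugacy distinguished} in $\Gamma_0$: for each $g$ not conjugate into $\langle\gamma\rangle$ there is a finite--index subgroup containing $\gamma$ and disjoint from the entire conjugacy class $g^{\Gamma_0}$. This does \emph{not} follow formally from conjugacy separability (which separates two non--conjugate \emph{elements} in a finite quotient) or from subgroup separability (which separates $\langle\gamma\rangle$ from a single element outside it, not from a whole conjugacy class). Your appeal to ``Minasyan'' is too vague: Minasyan's hereditary conjugacy separability of RAAGs is about elementwise conjugacy, not about separating a subgroup from a conjugacy class, and LERF is in any case not known for simplest--type arithmetic lattices in dimension $n\ge 4$ (only quasiconvex subgroups are known to be separable there). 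Concretely, you must produce a finite quotient $\phi:\Gamma_0\to Q$ in which $\phi(g)$ is conjugate to no power of $\phi(\gamma)$; since $\phi(\gamma)$ has some finite order $d$ in $Q$, this is a simultaneous separation of $g$ from the $d$ conjugacy classes $(\gamma^m)^{\Gamma_0}$, where $d$ depends on the quotient you have not yet chosen. Something like this may be extractable from the virtual--retract property of quasiconvex subgroups in virtually special hyperbolic groups, but you have neither supplied the argument nor a precise citation, and until you do Step~3 is unjustified.
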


The manifolds we exhibit in the proof of Theorem \ref{thm:arithmetic-systoles} are  arithmetic of simplest type (the latter being the only arithmetic construction that applies in every dimension) and are moreover {\em classical} in the language of Emery--Ratcliffe--Tschantz \cite{Emery2015SalemNA}. For $n \geq 4$, it follows from Meyer's theorem on quadratic forms and \cite[Thm.~5.2]{Emery2015SalemNA} that the exponential lengths of closed geodesics in any classical simplest-type closed arithmetic hyperbolic $n$-manifold are Salem numbers.\footnote{Exponential geodesic lengths in closed arithmetic hyperbolic surfaces or $3$-manifolds can be {\em quadratic} ``Salem numbers''; the latter are sometimes included in the definition of a Salem number. Exponential geodesic lengths in odd-dimensional closed arithmetic hyperbolic orbifolds need not be Salem numbers even in this more general sense; see Lemma 4.10 and Theorem 4.11 in \cite{MR1184416}, as well as \cite[Thm.~7.7]{Emery2015SalemNA}.}

Lehmer's conjecture for Salem numbers, or the {\em Salem conjecture} for short, asserts that the Salem numbers are bounded away from~$1$; see \cite[Section~13]{MR1503118} and \cite[p.~31]{MR732447}. In light of the relationship between Salem numbers and lengths of geodesics in arithmetic hyperbolic manifolds, Theorem \ref{thm:arithmetic-systoles} yields the following reformulation of the Salem conjecture.

\begin{corollary}
The Salem conjecture holds if and only if, for some, equivalently, any, $n \geq 2$, the set of systoles of closed arithmetic hyperbolic $n$-manifolds fails to be dense in $(0, +\infty)$. 
\end{corollary}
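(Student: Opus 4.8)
The plan is to prove the cycle of implications
\[
(\mathrm S)\implies(\forall n\ge 2)\,(\mathrm{ND}_n)\implies(\exists n\ge 2)\,(\mathrm{ND}_n)\implies(\mathrm S),
\]
where $(\mathrm S)$ denotes the Salem conjecture and $(\mathrm{ND}_n)$ denotes the assertion that the systoles of closed arithmetic hyperbolic $n$-manifolds are not dense in $(0,+\infty)$. The middle implication is trivial, so two things remain.

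For $(\exists n)\,(\mathrm{ND}_n)\implies(\mathrm S)$ I would argue by contraposition. If $(\mathrm S)$ fails then, since every Salem number exceeds $1$, there is a sequence of Salem numbers $\lambda_k\to 1$. Using the classical fact that every positive power of a Salem number is again a Salem number, the set $\Lambda:=\{\log\lambda:\lambda\text{ a Salem number}\}\subseteq(0,+\infty)$ is closed under multiplication by positive integers and contains the numbers $\log\lambda_k\to 0^+$. Hence, given any interval $(a,b)\subseteq(0,+\infty)$, one picks $k$ with $0<\log\lambda_k<b-a$ and then an integer $m\ge 1$ with $m\log\lambda_k=\log(\lambda_k^m)\in(a,b)$; as $\lambda_k^m$ is a Salem number, this shows $\Lambda$ is dense in $(0,+\infty)$. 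By \Cref{thm:arithmetic-systoles}, every element of $\Lambda$ is the systole of some closed arithmetic hyperbolic $n$-manifold for every $n\ge 2$, so $(\mathrm{ND}_n)$ fails for every $n$; this is the desired contrapositive.

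For $(\mathrm S)\implies(\forall n)\,(\mathrm{ND}_n)$, fix $n\ge 2$; I would show that under $(\mathrm S)$ the systoles of closed arithmetic hyperbolic $n$-manifolds are bounded below by some $\delta>0$, which leaves $(0,\delta)$ outside the systole set. The key is the known link between geodesic lengths and Salem numbers. For the simplest type it is transparent: a loxodromic $\gamma$ in the lattice translating by $\ell$ along its axis has entries in the ring of integers of the (totally real) field of definition $k$, and has unimodular eigenvalues at every archimedean place of $k$ except the one at which the orthogonal group is noncompact, where its non-unimodular eigenvalues are precisely $e^{\pm\ell}$; hence every Galois conjugate of $e^{\ell}$ other than $e^{\pm\ell}$ lies on the unit circle, and, since $e^{\ell}$ is an algebraic unit, it must have some conjugate of absolute value $<1$, necessarily $e^{-\ell}$ --- so $e^{\ell}$ is a Salem number or a real quadratic unit $>1$. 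For the remaining commensurability types (those occurring in dimensions $3$ and $7$, and the quaternionic construction in odd dimensions) I would instead invoke the analyses of \cite[\S7]{Emery2015SalemNA} and \cite{MR1184416}, which --- possibly after replacing $e^{\ell}$ by a power with exponent bounded in terms of $n$, or by a suitable norm --- likewise pin $e^{\ell}$ down in terms of Salem numbers and real quadratic units. Since real quadratic units $>1$ are unconditionally at least $\frac{1+\sqrt 5}{2}$, whereas $(\mathrm S)$ furnishes $\epsilon_0>0$ with every Salem number $\ge 1+\epsilon_0$, one gets $e^{\ell}\ge(1+\epsilon_0)^{1/c_n}$ for a constant $c_n$ depending only on $n$, whence $\mathrm{sys}(M)\ge c_n^{-1}\log(1+\epsilon_0)>0$ for every closed arithmetic hyperbolic $n$-manifold $M$.

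I expect the one genuine obstacle to lie in this last implication, namely in the uniform bookkeeping behind the claim that exponential geodesic lengths in \emph{every} closed arithmetic hyperbolic $n$-manifold --- across all dimensions and all commensurability classes, not just the classical simplest-type examples highlighted in the excerpt --- are controlled by Salem numbers in the sense above. The delicate point is odd $n$, where (as the footnote recalls) $e^{\ell}$ need be neither a Salem number nor a quadratic unit, so one must pass to an appropriate power or norm while checking that the relevant exponent stays bounded; granting that, the Salem conjecture delivers the uniform positive lower bound on systoles. Everything else is soft, relying only on \Cref{thm:arithmetic-systoles} and the stability of Salem numbers under positive powers.
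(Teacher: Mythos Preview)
Your contrapositive argument that failure of the Salem conjecture yields density of arithmetic systoles in every dimension is correct and is exactly what the paper does: powers of Salem numbers are Salem, so their logarithms are dense in $(0,+\infty)$, and then \Cref{thm:arithmetic-systoles} applies.

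For the implication $(\mathrm S)\Rightarrow(\forall n)(\mathrm{ND}_n)$, the paper proceeds differently and more efficiently: it simply invokes the result of Fr\c{a}czyk--Pham \cite{MR4608431}, which shows in full generality that the Salem conjecture implies a uniform positive lower bound on systoles of closed arithmetic locally symmetric manifolds modeled on any fixed irreducible symmetric space of noncompact type. This immediately gives nondensity (indeed, bounded-away-from-zero) for every $n$, with no case analysis. Your hands-on approach is correct in spirit for classical simplest-type manifolds, but the step you flag as the ``genuine obstacle'' really is one: in odd dimensions and for the non-classical constructions, $e^{\ell}$ need not be a Salem number or a quadratic unit, and extracting from \cite{Emery2015SalemNA} and \cite{MR1184416} a uniform bound of the form $e^{\ell}\ge(1+\epsilon_0)^{1/c_n}$ is precisely the content of a result like Fr\c{a}czyk--Pham, not an immediate consequence of those references. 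So as written your argument has a gap exactly where you identify it; citing \cite{MR4608431} closes it.
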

\begin{proof} It is known generally that, given any irreducible symmetric space $X$ of noncompact type, the Salem conjecture implies a uniform lower bound on systoles of closed arithmetic locally symmetric manifolds modeled on $X$; see Fr\c aczyk--Pham~\cite{MR4608431}. On the other hand, since positive powers of Salem numbers remain Salem numbers, failure of the Salem conjecture would imply density of Salem numbers in $(1, +\infty)$, and hence density of systoles of closed arithmetic hyperbolic $n$-manifolds in $(0, +\infty)$ for each $n \geq 2$ by Theorem \ref{thm:arithmetic-systoles}.
\end{proof}

We motivate the proofs of Theorems \ref{thm:density-of-systoles} and \ref{thm:arithmetic-systoles} with the following remark.

\begin{remark}\label{dim2}
We provide an argument along the lines of \cite{agol2006systoles} that, given any $L > 0$, there is a closed hyperbolic surface of systole precisely~$L$. Indeed, it suffices to find a closed hyperbolic surface $M_L$ of systole $\geq L$ possessing two disjoint $2$-sided simple closed geodesics $\Sigma_1$ and $\Sigma_2$ and an orthogeodesic segment $\omega$ joining the $\Sigma_i$ of length~$L/2$ and whose length is minimal among all orthogeodesic segments with endpoints on $\Sigma_1 \cup \Sigma_2$. Cutting $M_L$ along the $\Sigma_i$ and then doubling the resulting surface along its boundary yields a (possibly disconnected) closed hyperbolic surface of systole precisely $L$. 

One can construct such a surface $M_L$ as follows. Let $P \subset \mathbb{H}^2$ be a right-angled pentagon with an edge $\widetilde{\omega}$ of length precisely $L/2$, and let $H_1, H_2 \subset \mathbb{H}^2$ be the walls of $P$ adjacent to $\widetilde{\omega}$. Let $\Gamma_P < \mathrm{Isom}(\mathbb{H}^2)$ be the group generated by the reflections in the walls of $P$. We can find a larger right-angled convex polygon $Q \subset \mathbb{H}^2$ that is a union of finitely many $\Gamma_P$-translates of $P$ such that the walls of $Q$ that enter the $\frac{L}{2}$-neighborhood of $H_i$ are orthogonal to $H_i$ for $i =1,2$ (this idea originates in work of Scott \cite{MR494062}; see also \cite[Section~3.1]{MR1836283}). By residual finiteness and virtual torsion-freeness of the group $\Gamma_Q < \mathrm{Isom}(\mathbb{H}^2)$ generated by the reflections in the walls of $Q$, there is a finite-index subgroup $\Lambda <\Gamma_Q$ such that $\Lambda \backslash \mathbb{H}^2$ is a surface of systole $\geq L$. We may now take $M_L = \Lambda \backslash \mathbb{H}^2$, the $\Sigma_i$ to be the projections to $M_L$ of the $H_i$, and $\omega$ to be the projection of $\widetilde{\omega}$.
\end{remark}

\begin{proof}[Proof of Theorem \ref{thm:density-of-systoles}]

Given $L, \epsilon>0$, we  exhibit a closed hyperbolic $n$-manifold $M$ with $\lvert\text{sys}(M)-L\rvert<\epsilon$. As in Remark \ref{dim2}, it suffices to find a closed hyperbolic $n$-manifold $M_{L, \epsilon}$ of systole $\geq L + \epsilon$ possessing two disjoint $2$-sided closed embedded totally geodesic hypersurfaces $\Sigma_1$ and $\Sigma_2$, and an orthogeodesic segment $\omega$ joining the $\Sigma_i$ of length within $\epsilon/2$ from $L/2$ and whose length is minimal among all orthogeodesic segments with endpoints on $\Sigma_1 \cup \Sigma_2$. We can then cut $M_{L, \epsilon}$ along $\Sigma_1\cup\Sigma_2$, select the component $N$ containing $\omega$ of the resulting manifold with boundary, and define $M$ to be the double of $N$. The double of $\omega$ is indeed a shortest closed geodesic in $M$ of length within $\epsilon$ from  $L$.

To that end, let $k=\mathbb{Q}(\sqrt{2})$ and $\calO_k = \mathbb{Z}[\sqrt{2}]$. Define $f$ to be the quadratic form on $\R^{n+1}$ given by
\[f(x_1,\cdots,x_{n+1})=x_1^2+\cdots+x_n^2-\sqrt{2}x_{n+1}^2.\]

We identify the level set $\{x\in\R^{n+1}\vert f(x)=-1, x_{n+1}>0\}$ with $n$-dimensional hyperbolic space $\Hyp^n$ and $\OO'(f;\R)$ with $\mathrm{Isom}(\Hyp^n)$, where $\OO'(f;\R)$ is the index-$2$ subgroup of $\OO(f;\R)$ preserving $\Hyp^n$. By the Borel--Harish-Chandra theorem \cite{MR147566}, we have that $\Gamma:= \OO'(f;\calO_k)$ is a uniform arithmetic lattice of $\mathrm{Isom}(\Hyp^n)$. 

Denote by $H_1$ the hyperplane $\{x_1=0\}$ in $\Hyp^n$. Since $\OO'(f;k)$ is dense in $\OO'(f;\R)$, there is an element $g\in\OO'(f;k)$ such that
\[\left\lvert\mathrm{dist}_{\Hyp^n}(H_1,gH_1)-\frac{L}{2}\right\rvert < \frac{\epsilon}{2}.\] 
It follows again from the Borel--Harish-Chandra theorem that $H_1$ and $H_2:= gH_1$ project to closed immersed totally geodesic hypersurfaces in the orbifold $\Gamma \backslash \Hyp^n$.
Let $\widetilde{\omega}$ be the orthogeodesic segment in $\Hyp^n$ connecting $H_1$ and $H_2$. By the proof of Lemma 3.1 in Belolipetsky--Thomson  \cite{MR2821431}, we may now pass to a non-zero ideal $I \subset \mathcal{O}_k$ such that $\mathrm{dist}_{\Hyp^n}(H_1, \gamma H_2) \geq \mathrm{dist}_{\Hyp^n}(H_1, H_2)$ for each $\gamma$ in the principal congruence subgroup $\Gamma(I)<\Gamma$ of level $I$.
Up to diminishing the ideal $I$, we can further assume that
\begin{enumerate}
\item the principal congruence subgroup $\Gamma(I)$ is torsion-free, so that $M_{L, \epsilon}:= \Gamma(I) \backslash \Hyp^n$ is a manifold;
\item\label{rf} we have $\mathrm{sys}(M_{L, \epsilon}) \geq L + \epsilon$;
\item\label{separability} for $i=1,2$ and $\gamma \in \Gamma(I)$, either $\gamma H_i = H_i$, in which case $\gamma$ furthermore preserves each side of $H_i$, or $\mathrm{dist}_{\Hyp^n}(H_i, \gamma H_i) \geq \mathrm{dist}_{\Hyp^n}(H_1, H_2)$; in particular, the projections $\Sigma_i$ of the $H_i$ to $M_{L, \epsilon}$ are $2$-sided closed {\em embedded} totally geodesic hypersurfaces in $M_{L, \epsilon}$.

\end{enumerate}

Note that item (\ref{rf}) can be arranged by residual finiteness of the ring $\mathcal{O}_k$, whereas item (\ref{separability}) can be ensured by the fact that the stabilizer in $\Gamma$ of either side of $H_i$ is an intersection of congruence subgroups of $\Gamma$ for $i=1,2$; see \cite{MR0898729} and \cite[Lemme~principal]{MR1769939}. We now have that the $\Sigma_i$ are disjoint and that any orthogeodesic segment in $M_{L, \epsilon}$ with endpoints on $\Sigma_1 \cup \Sigma_2$ has length at least that of the projection $\omega$ of $\widetilde{\omega}$ to $M_{L,\epsilon}$. Thus, the manifold $M_{L, \epsilon}$, the hypersurfaces $\Sigma_i$, and the orthogeodesic segment $\omega$ are as desired.
\end{proof}

\begin{remark}\label{mahler}
The proof of Theorem \ref{thm:density-of-systoles} demonstrates that we may take the closed hyperbolic manifolds giving rise to a dense set of systoles in $(0, +\infty)$ to be quasi-arithmetic in the sense of Vinberg and all share the same adjoint trace field (indeed, the same Vinberg ambient algebraic group); see \cite{MR3451458} and compare \cite[Thm.~1.3]{MR4557016}. In this case, infinitely many of these manifolds would necessarily be nonarithmetic. This is because, for fixed $D >0$ and $\mu > 1$, there are only finitely many monic integer polynomials of degree $\leq D$ and Mahler measure $\leq \mu$; see the discussion immediately following Conjecture 10.2 in \cite{MR2084613}. 
 On the other hand, by varying the form $f$ in the proof, one easily produces a family of pairwise incommensurable closed hyperbolic manifolds in each dimension whose systoles remain dense in $(0, +\infty)$. 

In fact, it is possible to achieve the latter without varying $f$; one can select from any family of closed hyperbolic manifolds whose systoles are dense in $(0, +\infty)$ pairwise incommensurable manifolds whose systoles remain dense in $(0, +\infty)$. Indeed, given a closed hyperbolic manifold $M$, the set of all geodesic lengths of manifolds commensurable to $M$ is closed and discrete in $[0, +\infty)$. When $M$ is nonarithmetic, the latter follows from Margulis's arithmeticity criterion \cite[Thm.~IX.6.5]{margulis91discrete}, and when $M$ is arithmetic, from the fact about Mahler measures of bounded-degree monic integer polynomials mentioned in the previous paragraph.
\end{remark}

\begin{remark}
Instead of appealing to the work of Belolipetsky--Thomson (loc. cit.) to find the manifold $M_{L, \epsilon}$ in the proof of Theorem \ref{thm:density-of-systoles}, we could have instead used \cite[Cor.~1.12]{bergeron2011hyperplane} as in the proof of Theorem 4 in \cite{douba2023systoles}. An interesting feature of the former approach is that it afforded us a manifold $M_{L, \epsilon}$ that is {\em congruence} arithmetic.
\end{remark}

\begin{proof}[Proof of Theorem \ref{thm:arithmetic-systoles}]
Let $\mu = \lambda + \lambda^{-1}$. Let $k= \mathbb{Q}(\mu)$, and $\mathcal{O}_k$ be the ring of integers of $k$. Following the proof of \cite[Thm.~6.3]{Emery2015SalemNA}, we define the form $f$ in $n+1$ variables over $k$ to be that given by the symmetric matrix
\[
\begin{pmatrix}
1 & & \mu/2 \\
 & I_{n-1} & \\
 \mu/2 & & 1
\end{pmatrix}.
\]
The form $f$ is of signature $(n,1)$, so that we may again identify $\Hyp^n$ with one of the two sheets of the hyperboloid $\{x\in\R^{n+1}\vert f(x)=-1\}$ and $\mathrm{Isom}(\Hyp^n)$ with $\OO'(f;\R)$, where $\OO'(f;\R)$ is the index-$2$ subgroup of $\OO(f;\R)$ preserving $\Hyp^n$. Moreover, the field $k$ is totally real, and for any embedding $\sigma:k\to\R$ other than the identity (of which at least one exists), the form $f^\sigma$ is positive definite. It follows that $\Gamma := \OO'(f ; \mathcal{O}_k)$ is a uniform arithmetic lattice of $\OO'(f ; \mathbb{R})= \mathrm{Isom}(\Hyp^n)$, which contains the reflections
\[
\tau_1 := \begin{pmatrix}
-1 & & -\mu \\
 & I_{n-1} & \\
  & & 1
\end{pmatrix}, \> \>
\tau_2 := \begin{pmatrix}
-\mu & & 1-\mu^2 \\
 & I_{n-1} & \\
 1 & & \mu
\end{pmatrix}
\]
whose respective fixed hyperplanes $H_1, H_2 \subset \Hyp^n$ satisfy \[\mathrm{dist}_{\Hyp^n}(H_1, H_2) = \log(\lambda)/2.\]  As in the proof of Theorem \ref{thm:density-of-systoles}, by the aforementioned work of Belolipetsky--Thomson (loc. cit.), we may pass to a non-zero ideal $I \subset \mathcal{O}_k$ such that
\begin{itemize}
\item the principal congruence subgroup $\Gamma(I)$ is torsion-free, so that $M_{\log(\lambda)}:= \Gamma(I) \backslash \Hyp^n$ is a manifold;
\item we have $\mathrm{sys}(M_{\log(\lambda)}) \geq \log(\lambda)$;
\item the $H_i$ project to disjoint closed embedded $2$-sided totally geodesic hypersurfaces $\Sigma_1$ and $\Sigma_2$ in $M_{\log(\lambda)}$; 
\item any orthogeodesic segment in $M_{\log(\lambda)}$ with endpoints on $\Sigma_1 \cup \Sigma_2$ has length $\geq \log(\lambda)/2$. 
\end{itemize}
Now cutting $M_{\log(\lambda)}$ along the $\Sigma_i$ and then doubling the resulting manifold along its boundary yields a (possibly disconnected) closed manifold $M$ of systole precisely $\log(\lambda)$ each of whose components is arithmetic. 
\end{proof}

\begin{remark}
Note that each component of the output manifold $M$ in the proof of Theorem \ref{thm:arithmetic-systoles} is arithmetic because the reflections $\tau_i$ in the hyperplanes $H_i$ both belong to the input arithmetic lattice $\Gamma$, whereas there is no such guarantee in the proof of Theorem \ref{thm:density-of-systoles}.
\end{remark}

We conclude by remarking that there are natural spectral analogues of the questions considered in this paper, namely, what are the possible values of the smallest nonzero eigenvalue of the Laplacian of a closed hyperbolic manifold (respectively, an arithmetic hyperbolic manifold)? For recent progress in dimension $2$, where these questions are already nontrivial, see Kravchuk--Maz\'a\v{c}--Pal \cite{Kravchuk_2024}  and Magee \cite{magee2024limitpointsbassnotes}. See also Breuillard--Deroin \cite{MR4216688} for a spectral reformulation of the Salem conjecture.

\subsection*{Acknowledgements} We thank Misha Belolipetsky, Bram Petri, Alan Reid, and the anonymous referee for helpful comments. S.D. was supported by the Huawei Young Talents Program. J.H. was partially supported by NSF grant DMS-2005328. This work was initiated during a visit of S.D. to Yale University in March 2024; S.D. thanks Sebastian Hurtado and the entire geometry/topology/dynamics team at Yale for their hospitality.

\bibliography{densitybib}{}

\begin{thebibliography}{10}

\bibitem{agol2006systoles}
{\sc I.~Agol}, {\em Systoles of hyperbolic 4-manifolds}, arXiv preprint
  math/0612290,  (2006).

\bibitem{MR1836283}
{\sc I.~Agol, D.~D. Long, and A.~W. Reid}, {\em The {B}ianchi groups are
  separable on geometrically finite subgroups}, Ann. of Math. (2), 153 (2001),
  pp.~599--621.

\bibitem{MR2821431}
{\sc M.~V. Belolipetsky and S.~A. Thomson}, {\em Systoles of hyperbolic
  manifolds}, Algebr. Geom. Topol., 11 (2011), pp.~1455--1469.

\bibitem{MR1219310}
{\sc R.~Benedetti and C.~Petronio}, {\em Lectures on hyperbolic geometry},
  Universitext, Springer-Verlag, Berlin, 1992.

\bibitem{MR1769939}
{\sc N.~Bergeron}, {\em Premier nombre de {B}etti et spectre du laplacien de
  certaines vari\'{e}t\'{e}s hyperboliques}, Enseign. Math. (2), 46 (2000),
  pp.~109--137.

\bibitem{bergeron2011hyperplane}
{\sc N.~Bergeron, F.~Haglund, and D.~T. Wise}, {\em Hyperplane sections in
  arithmetic hyperbolic manifolds}, J. Lond. Math. Soc. (2), 83 (2011),
  pp.~431--448.

\bibitem{MR147566}
{\sc A.~Borel and Harish-Chandra}, {\em Arithmetic subgroups of algebraic
  groups}, Ann. of Math. (2), 75 (1962), pp.~485--535.

\bibitem{MR4216688}
{\sc E.~Breuillard and B.~Deroin}, {\em Salem numbers and the spectrum of
  hyperbolic surfaces}, Int. Math. Res. Not. IMRN,  (2020), pp.~8234--8250.

\bibitem{MR4557016}
{\sc G.~Cosac and C.~D\'{o}ria}, {\em Closed geodesics on semi-arithmetic
  {R}iemann surfaces}, Math. Res. Lett., 29 (2022), pp.~961--1001.

\bibitem{douba2023systoles}
{\sc S.~Douba}, {\em Systoles of hyperbolic hybrids}, arXiv preprint
  arXiv:2309.16051,  (2023).

\bibitem{Emery2015SalemNA}
{\sc V.~Emery, J.~G. Ratcliffe, and S.~T. Tschantz}, {\em Salem numbers and
  arithmetic hyperbolic groups}, Transactions of the American Mathematical
  Society, 372 (2019), pp.~329--355.

\bibitem{MR4608431}
{\sc M.~Fraczyk and L.~L. Pham}, {\em Bottom of the length spectrum of
  arithmetic orbifolds}, Trans. Amer. Math. Soc., 376 (2023), pp.~4745--4764.

\bibitem{MR2084613}
{\sc T.~Gelander}, {\em Homotopy type and volume of locally symmetric
  manifolds}, Duke Math. J., 124 (2004), pp.~459--515.

\bibitem{Kravchuk_2024}
{\sc P.~Kravchuk, D.~Mazáč, and S.~Pal}, {\em Automorphic spectra and the
  conformal bootstrap}, Communications of the American Mathematical Society, 4
  (2024), p.~1–63.

\bibitem{MR1503118}
{\sc D.~H. Lehmer}, {\em Factorization of certain cyclotomic functions}, Ann.
  of Math. (2), 34 (1933), pp.~461--479.

\bibitem{MR0898729}
{\sc D.~D. Long}, {\em Immersions and embeddings of totally geodesic surfaces},
  Bull. London Math. Soc., 19 (1987), pp.~481--484.

\bibitem{magee2024limitpointsbassnotes}
{\sc M.~Magee}, {\em The limit points of the bass notes of arithmetic
  hyperbolic surfaces}, arXiv preprint arxiv:2403.00928,  (2024).

\bibitem{margulis91discrete}
{\sc G.~A. Margulis}, {\em Discrete subgroups of semisimple {L}ie groups},
  vol.~17 of Ergebnisse der Mathematik und ihrer Grenzgebiete (3) [Results in
  Mathematics and Related Areas (3)], Springer-Verlag, Berlin, 1991.

\bibitem{mostow1968quasi}
{\sc G.~D. Mostow}, {\em Quasi-conformal mappings in $ n $-space and the
  rigidity of hyperbolic space forms}, Publications Math{\'e}matiques de
  l'IH{\'E}S, 34 (1968), pp.~53--104.

\bibitem{MR1184416}
{\sc W.~D. Neumann and A.~W. Reid}, {\em Arithmetic of hyperbolic manifolds},
  in Topology '90 ({C}olumbus, {OH}, 1990), vol.~1 of Ohio State Univ. Math.
  Res. Inst. Publ., de Gruyter, Berlin, 1992, pp.~273--310.

\bibitem{MR732447}
{\sc R.~Salem}, {\em Algebraic numbers and {F}ourier analysis}, in Selected
  reprints, Wadsworth Math. Ser., Wadsworth, Belmont, CA, 1983, pp.~iii+68.

\bibitem{MR494062}
{\sc P.~Scott}, {\em Subgroups of surface groups are almost geometric}, J.
  London Math. Soc. (2), 17 (1978), pp.~555--565.

\bibitem{MR3451458}
{\sc S.~Thomson}, {\em Quasi-arithmeticity of lattices in
  {$\mathrm{PO}(n,1)$}}, Geom. Dedicata, 180 (2016), pp.~85--94.

\end{thebibliography}
\bibliographystyle{siam}

\end{document}